\newtheorem{theorem}{Theorem}
\newtheorem{definition}[theorem]{Definition}
\newtheorem{proposition}[theorem]{Proposition}
\newtheorem{remark}[theorem]{Remark}
\numberwithin{equation}{section}
\newcommand \smu {\sqrt{\mu}}
\newcommand \del {\partial}
\begin{document}

\title{On the Vlasov-Poisson-Fokker-Planck equation \\
near Maxwellian}
\author{Hyung Ju Hwang\footnote{Supported by Basic Science Research Program
(2010-0008127) and Priority Research Centers Program (2010-0029638) through the National Research Foundation of Korea (NRF).}\, and
Juhi Jang\footnote{Supported by NSF Grant DMS-0908007}}

\maketitle

%\singlespacing

\begin{abstract}
We establish the exponential time decay rate of smooth solutions of small
amplitude to the Vlasov-Poisson-Fokker-Planck equations to the Maxwellian both
in the whole space and in the periodic box via the uniform-in-time energy
estimates and also the macroscopic equations. 
\end{abstract}

\bigskip

\numberwithin{equation}{section}

\section{Introduction}

In this article, we study the convergence to the equilibrium for the
nonlinear Vlasov-Poisson-Fokker-Planck  (VPFP)  equations in the whole space.
The VPFP system is one of the fundamental kinetic models in plasma physics to
describe the dynamics of charged particles (electrons and ions) subject to
the electrostatic force coming from their Coulomb interaction and to a
Brownian force modeling their collisions (Fokker-Planck). The VPFP system reads
\begin{equation}
f_{t}+v\!\cdot \!\nabla _{x}f+\text{div}_{v}((E-\beta v)f)=D\Delta _{v}f
\label{VPFP}
\end{equation}%
where $f=f(t,x,v)\geq 0$ is the distribution of particles at time $t$,
position $x$ and velocity $v$ for $(t,x,v)\in \mathbb{R}_{+}\times \mathbb{R}%
^{3}\times \mathbb{R}^{3}$, $\beta >0$ is the friction coefficient, $D>0$ is
the thermal diffusion coefficient, and $E=E(t,x)$ is the self-consistent
electric force. The equation \eqref{VPFP} is coupled with the Poisson
equation
\begin{equation}
E=-\nabla \Phi ,\quad -\Delta \Phi =\int_{\mathbb{R}^{3}}fdv-1,\quad \Phi
\longrightarrow 0\text{ as }|x|\rightarrow \infty   \label{poisson}
\end{equation}%
where $\Phi =\Phi (t,x)$ is the internal potential.

The global equilibrium of \eqref{VPFP} and \eqref{poisson} is given by the Maxwellian:
\begin{equation}
f=\mu :=\mu (v):=(\frac{2\pi D}{\beta })^{-\frac{3}{2}}e^{-\frac{\beta
|v|^{2}}{2D}},\quad E=0\,.
\end{equation}%
For simplicity, we take $\beta =1,\;D=1$. Letting
\begin{equation*}
f=\mu +\sqrt{\mu }g\,,
\end{equation*}%
the VPFP system is written in the following perturbed form: 
\begin{equation}
\begin{split}
& g_{t}+v\!\cdot \!\nabla _{x}g+v\sqrt{\mu }\!\cdot \!\nabla _{x}\phi +(%
\frac{v}{2}g-\nabla _{v}g)\!\cdot \!\nabla _{x}\phi -Lg=0, \\
& -\Delta \phi =\int_{\mathbb{R}^{3}}g\sqrt{\mu }dv ,
\end{split}%
\label{pVPFP}
\end{equation}%
where $L$ is the linearized Fokker-Planck operator given by 
\begin{equation}\label{L}
-Lg:=(\frac{|v|^{2}}{4}-\frac{3}{2}-\Delta _{v})g=-\frac{1}{\sqrt{\mu }}%
\nabla _{v}\!\cdot \!\left[ \mu \nabla _{v}(\frac{g}{\sqrt{\mu }})\right]\,.
\end{equation}%
The Fokker-Planck operator is a well-known hypoelliptic operator.
Diffusion in $v$ together with transport $v \cdot \nabla_x $ has a
regularizing effect not only in $v$ but also in $t$ and $x$. Note
that this is nontrivial since the diffusion only acts on the
velocity. This phenomenon can be obtained by applying
H\"{o}rmander's commutator (cf. \cite{H}) to linear Fokker-Planck
operator. For more details, we refer to \cite{B}. In particular, in \cite{HN}  smoothing
property has been shown for the linear Vlasov-Fokker-Planck  as a
hypoelliptic operator when the external potential satisfies a
certain condition.

On the other hand, the Vlasov-Fokker-Planck (VFP) operator is also known as hypocoercive operator,
which concerns rate of convergence to equilibrium. (See \cite{V}). Indeed, the trend to the equilibrium
with the rate of $t^{- 1/\epsilon}$
is investigated in \cite{DV} in the case of the linear VFP equation with the external potential
which is strictly convex at infinity.
Later, it is proved in \cite{HN} that
the solutions for the linear VFP equation with the external potential of high-degree
approach exponentially to the Maxwellian equilibrium and the rate was given explicitly.
In \cite{V}, Villani shows almost exponential decay to the equilibrium for the weakly self-consistent
VFP equation in the periodic box
with the Coulomb interaction potential replaced by a small and smooth potential.
When the coupling of the Poisson equation comes into play, then the fully nonlinear VPFP system adds more difficulties
to study in this context.
In \cite{BD}, the convergence of free energy solutions in $L^1(\mathbb{R}^6)$ to the equilibrium for the VPFP equations with
the confinement by the external potential was studied, but the convergence rate was not considered.

To our knowledge, the convergence to the Maxwellian has not been yet shown for the fully nonlinear
VPFP equations in the whole space without any confinement. The
main goal of this paper is to establish the exponential convergence rate of the solutions for the nonlinear VPFP equations 
to the  Maxwellian in the whole space  in the regime of small and smooth solutions.

Before we state the main results, we briefly review the existence
theories for the VPFP equations. Global existence of the solutions to the
VPFP system in two and three dimensions have been studied by many authors
and we will not attempt to exhaust references in this paper.
Classical solutions were obtained by Victory and O'Dwyer \cite{VO}
in two dimensions and by Rein and Weckler \cite{RW} for small data
in three dimensions.  Bouchut \cite{B2}  established the existence
and uniqueness of a global smooth solution in $L^1$ setting in
three dimensions. Asymptotic behaviors and time decay  of the
solutions near vacuum regime have been considered by Caprio
\cite{C}, Carrillo, Soler and Vazquez \cite{CSV}, and Ono and
Strauss \cite{OS}. We mention the work of Victory \cite{Vi},
Carrillo and Soler \cite{CS} where the global weak solutions were
constructed and also Bouchut \cite{B3} where the smoothing effect
was observed. Recently, stability of the front under a
Vlasov-Fokker-Planck dynamics was studied in \cite{EGM}.

There have been a lot of recent progress made on the results
towards the convergence rate to the equilibrium near Maxwellian regime
for other fundamental kinetic models such as the Boltzmann
equations with various collision kernels, for instance see
\cite{DS, DV2, M, SG}. We also mention that there are quite recent
results on this problem of trend to the Maxwellian equilibrium for the
fluid-kinetic models such as Navier-Stokes-Vlasov-Fokker-Planck
\cite{GHMZ} and Vlasov-Fokker-Planck-Euler equations \cite{CDM}.

In the next section, we introduce some notations and state the main results.

\section{Notations and Main Results}\label{2}

Here are some notations which will be used throughout the paper.
\begin{equation*}
\begin{split}
&\langle f,g\rangle:=\int_{\mathbb{R}^3} fgdv ,\\
&|g|^2_\nu:=\int_{\mathbb{R}^3}\left( |\nabla_vg|^2+\nu(v)|g|^2\right)dv
\,\text{ where }\, \nu(v):=1+|v|^2 ,\\
&\|g\|^2_\nu:=\int_{\mathbb{R}^3\times\mathbb{R}^3}\left(
|\nabla_vg|^2+\nu(v)|g|^2\right)dxdv ,\\
&\|g\|^2:=\|g\|^2_{L^2(\mathbb{R}_x^3\times\mathbb{R}_v^3)} \text{ or }
\|a\|^2:=\|a\|^2_{L^2(\mathbb{R}_x^3)},\\
& \|g\|_\infty:=\|g\|_{L^\infty(%
\mathbb{R}_x^3\times\mathbb{R}_v^3)} ,\\
&\del^\alpha_x:=\del^{\alpha_1}_{x_1}\del^{\alpha_2}_{x_2}\del%
^{\alpha_3}_{x_3} .
\end{split}%
\end{equation*}
We recall the basic properties of the linearized Fokker-Planck operator $L$ in \eqref{L},
for instance see \cite{CDM,V}.
\begin{equation}  \label{propL}
\langle f, Lg\rangle =\langle Lf, g\rangle,\quad\text{Ker}\,L=\text{Span}\{%
\sqrt{\mu}\},\quad L(v\sqrt{\mu})=-v\sqrt{\mu} .
\end{equation}
We further introduce projections onto $\smu$ and $v\smu$.
\begin{equation*}
\begin{split}
&\mathbf{P}_0 g:=\langle
g,\sqrt{\mu}\rangle\sqrt{\mu}=:\sigma(t,x)\sqrt{\mu} ,
\\
&\mathbf{P}_1 g:=\langle g,v\sqrt{\mu}\rangle\!\cdot\! v\sqrt{\mu}=:{u}%
(t,x)\!\cdot\! v\sqrt{\mu} ,\\
&\mathbf{P}:=\mathbf{P}_0+\mathbf{P}_1 .
\end{split}%
\end{equation*}
Here $\sigma$ represents the density fluctuation and $u$ the velocity
fluctuation. Now the coercivity of $-L$ can be written in terms of
projections as follows (for instance see \cite{CDM}): for a positive
constant $\lambda_0>0$
\begin{equation}
\begin{split}  \label{coerL}
\langle g, -Lg \rangle &\geq \lambda_0|\{\mathbf{I-P}_0\}g
|^2_\nu\quad ,
\text{ or} \\
\langle g, -Lg \rangle &\geq \lambda_0|\{\mathbf{I-P}\}g
|^2_\nu+|{u}|^2 .
\end{split}%
\end{equation}

We next define the instant energy functionals and dissipation.

\begin{definition}[Instant energy]
For $N\geq 3$, there exists a constant $C>0$ such that an instant energy
functional $\mathcal{E}_{N}(g,\nabla \phi )=:\mathcal{E}_{N}(t)$ satisfies the following:
\begin{equation*}
\frac{1}{C}\mathcal{E}_{N}(t)\leq \sum_{|\alpha |\leq N}\left( \Vert \del%
_{x}^{\alpha }g\Vert ^{2}+\Vert \del_{x}^{\alpha }\nabla \phi
\Vert ^{2}\right) \leq C\mathcal{E}_{N}(t) 
\end{equation*}%
where $\mathcal{E}_{N}(t)$ will be defined in (\ref{EN}).
\end{definition}

\begin{definition}[Dissipation]
For $N\geq 3$, the dissipation rate $\mathcal{D}_N(t)$ is defined by
\begin{equation*}
\begin{split}
\mathcal{D}_N(t)&:= \frac12\sum_{|\alpha|\leq N} \left( \|
\mathbf{P}_1\del_x^\alpha g\|^2+\lambda_0\|\{\mathbf{I-P} \}\del_x^\alpha g\|_\nu^2\right) \\
&\quad+ \frac{\kappa}{2} \sum_{|\alpha|\leq N-1}\left(\|
\del_x^\alpha\nabla \phi \|^2+ 4\|\mathbf{P}_0\del_x^\alpha g\|^2
+\|\mathbf{P}_0\del_x^\alpha \nabla g\|^2  \right) ,
\end{split}%
\end{equation*}
where $\kappa=\min\{\lambda_0/2,1/8\}$.
\end{definition}

\begin{remark}
Note that the first part of the dissipation $\mathcal{D}_{N}$ is due to the
collisions as seen in \eqref{coerL}. What is distinguishable for the VPFP system is that
the macroscopic part as well as the potential part also dissipates because of the extra damping due to
Coulomb interaction. The exponential convergence rate is obtained through
such damping of the density fluctuation, which can be acheived by means of the
macroscopic equations.
\end{remark}

We are now ready to state the main results. The first theorem is about the
uniform energy estimates for the VPFP equations.

\begin{theorem}
\label{thm1} Let $N\geq 3$. There exist an instant energy functional $%
\mathcal{E}_N(t)$ and a sufficiently small $\epsilon_0>0$ so that if $%
\mathcal{E}_N(0)\leq \epsilon_0$, then the smooth solutions $(g,\nabla\phi)$
to the Vlasov-Poisson-Fokker-Planck equations \eqref{pVPFP} satisfy the
following energy inequality
\begin{equation}  \label{eeee}
\frac{d}{dt}{\mathcal{E}_N}(t) + \mathcal{D}_N(t) \leq 0\,.
\end{equation}
In particular, we have the global energy bound
\begin{equation*}
\sup_{t\geq 0}{\mathcal{E}_N}(t) \le {\mathcal{E}_N}(0)\,.
\end{equation*}
\end{theorem}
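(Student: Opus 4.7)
The plan is to prove \eqref{eeee} by combining two kinds of estimates into a single Lyapunov functional: a pure $L^{2}$ energy estimate at every spatial derivative order $|\alpha|\leq N$, and macroscopic estimates derived from the moment equations of \eqref{pVPFP}. The first extracts collisional and $u$-dissipation via the coercivity \eqref{coerL}; the second supplies the dissipation of $\mathbf{P}_{0}g$ and of $\nabla\phi$, invisible at the pure $L^{2}$ level because $\mathbf{P}_{0}$ lies in $\mathrm{Ker}\,L$. For each $|\alpha|\leq N$, I apply $\del_{x}^{\alpha}$ to \eqref{pVPFP} and pair with $\del_{x}^{\alpha}g$ in $L^{2}(\mathbb{R}_{x}^{3}\times\mathbb{R}_{v}^{3})$: the transport $v\cdot\nabla_{x}$ is skew-symmetric; $\langle\del_{x}^{\alpha}g,-L\del_{x}^{\alpha}g\rangle\geq\lambda_{0}\|\{\mathbf{I-P}\}\del_{x}^{\alpha}g\|_{\nu}^{2}+\|\del_{x}^{\alpha}u\|^{2}$ by \eqref{coerL}; and the linear Poisson coupling contributes $I_{\alpha}:=\int\del_{x}^{\alpha}\nabla\phi\cdot\del_{x}^{\alpha}u\,dx$. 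I balance $I_{\alpha}$ by differentiating $\tfrac{1}{2}\|\del_{x}^{\alpha}\nabla\phi\|^{2}$ in time: testing \eqref{pVPFP} against $\sqrt{\mu}$ and integrating in $v$ gives the continuity equation $\del_{t}\sigma+\nabla_{x}\cdot u=0$ (the nonlinear potential piece drops after integration by parts in $v$, and $L\sqrt{\mu}=0$), so $\tfrac{1}{2}\del_{t}\|\del_{x}^{\alpha}\nabla\phi\|^{2}=I_{\alpha}$. Combining the two identities for each $\alpha$ cancels $I_{\alpha}$ and, summing over $|\alpha|\leq N$, yields
\begin{equation*}
\tfrac{1}{2}\tfrac{d}{dt}\sum_{|\alpha|\leq N}\bigl(\|\del_{x}^{\alpha}g\|^{2}+\|\del_{x}^{\alpha}\nabla\phi\|^{2}\bigr)+\sum_{|\alpha|\leq N}\bigl(\lambda_{0}\|\{\mathbf{I-P}\}\del_{x}^{\alpha}g\|_{\nu}^{2}+\|\del_{x}^{\alpha}u\|^{2}\bigr)\leq \mathcal{N},
\end{equation*}
where $\mathcal{N}$ collects the nonlinear contributions.

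For the macroscopic step, I multiply \eqref{pVPFP} by $v\sqrt{\mu}$ and integrate in $v$; using $\int v_{i}v_{j}\mu\,dv=\delta_{ij}$ and $L(v\sqrt{\mu})=-v\sqrt{\mu}$, this produces a momentum equation of schematic form $\del_{t}u+\nabla_{x}\sigma+\nabla_{x}\phi+u=\nabla_{x}\cdot R[\{\mathbf{I-P}\}g]+\mathrm{NL}$, where $R$ is a weighted velocity moment of $\{\mathbf{I-P}\}g$ controlled by $\|\{\mathbf{I-P}\}g\|_{\nu}$. Applying $\del_{x}^{\alpha}$ for $|\alpha|\leq N-1$ and testing against $\del_{x}^{\alpha}\nabla\phi$, using $\int\nabla\sigma\cdot\nabla\phi=\|\sigma\|^{2}$ from Poisson, produces the good dissipation $\|\del_{x}^{\alpha}\nabla\phi\|^{2}+\|\del_{x}^{\alpha}\sigma\|^{2}$ up to the time derivative of $\int\del_{x}^{\alpha}u\cdot\del_{x}^{\alpha}\nabla\phi\,dx$ (the reciprocal term $\int\del_{x}^{\alpha}u\cdot\del_{t}\del_{x}^{\alpha}\nabla\phi$ has a favorable sign after using continuity, equal to $\|(-\Delta)^{-1/2}\nabla\cdot\del_{x}^{\alpha}u\|^{2}\leq C\|\del_{x}^{\alpha}u\|^{2}$) plus error terms bounded by $\|\del_{x}^{\alpha}u\|^{2}$, $\|\{\mathbf{I-P}\}\del_{x}^{\alpha}g\|_{\nu}^{2}$, and $\mathcal{N}$. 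Testing against $\del_{x}^{\alpha}\nabla\sigma$, combined with continuity, yields $\|\del_{x}^{\alpha}\nabla\sigma\|^{2}=\|\mathbf{P}_{0}\del_{x}^{\alpha}\nabla g\|^{2}$ modulo $\|\del_{x}^{\alpha+1}u\|^{2}$, already contained in the pure-energy dissipation. Collecting all these interaction pieces into a functional $\mathcal{I}_{N}(t)$ with $|\mathcal{I}_{N}|\leq C\sum_{|\alpha|\leq N}(\|\del_{x}^{\alpha}g\|^{2}+\|\del_{x}^{\alpha}\nabla\phi\|^{2})$, I find that $\tfrac{d}{dt}\mathcal{I}_{N}$ controls the macroscopic part of $\mathcal{D}_{N}$.

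Setting $\mathcal{E}_{N}(t):=\tfrac{1}{2}\sum_{|\alpha|\leq N}(\|\del_{x}^{\alpha}g\|^{2}+\|\del_{x}^{\alpha}\nabla\phi\|^{2})+\eta\,\mathcal{I}_{N}(t)$ for $\eta>0$ sufficiently small yields the norm equivalence claimed in the definition of $\mathcal{E}_{N}$, while the collisional/$u$-dissipation absorbs the residual errors from $\eta\mathcal{I}_{N}$ to leave exactly $\mathcal{D}_{N}$ on the left. The nonlinear terms $\mathcal{N}$ are handled via Sobolev embedding ($N\geq 3$ gives $L_{x}^{\infty}$ control on $\nabla\phi$ and on low derivatives of $g$), the weight $\sqrt{\mu}$ to kill polynomial $v$-growth, and integration by parts in $v$ to transfer the $\nabla_{v}g$ factor in $(\tfrac{v}{2}g-\nabla_{v}g)\cdot\nabla\phi$ onto $\del_{x}^{\alpha}g$ and $\sqrt{\mu}$; each term is bounded by $C\mathcal{E}_{N}^{1/2}\mathcal{D}_{N}$. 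Under the smallness hypothesis $\mathcal{E}_{N}(0)\leq\epsilon_{0}$, a standard continuity argument absorbs $\mathcal{N}\leq\tfrac{1}{2}\mathcal{D}_{N}$ and \eqref{eeee} follows; the global bound is then just integration in $t$. The principal difficulty is the macroscopic step: the elliptic tie $-\Delta\phi=\sigma$ couples $\sigma$ and $\nabla\phi$, so all three hydrodynamic dissipation norms must be recovered from a single set of momentum-type multipliers while keeping the interaction functional controlled by the pure energy, and the nonlinear $\nabla_{v}g$ coupling must be treated in $L^{2}$ without introducing velocity-derivative norms into $\mathcal{E}_{N}$.
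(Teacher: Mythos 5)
Your proposal follows essentially the same two-step strategy as the paper: a pure $L^{2}$ energy estimate at each spatial derivative order $|\alpha|\leq N$ (with the linear Poisson coupling absorbed into $\tfrac{d}{dt}\|\del_x^\alpha\nabla\phi\|^2$ via the continuity equation, and the coercivity \eqref{coerL} extracting $\|\del_x^\alpha u\|^2$ and $\|\{\mathbf{I-P}\}\del_x^\alpha g\|_\nu^2$), followed by a macroscopic momentum equation tested against $\del_x^\alpha\nabla\sigma$ and $\del_x^\alpha\nabla\phi$ to recover the hydrodynamic and potential dissipation, with the resulting cross terms collected into an interaction functional ($G(t)$ in the paper, your $\mathcal{I}_N$) that is added with a small coefficient to preserve norm equivalence before a standard continuity argument closes the estimate. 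The approach and the key mechanisms — the continuity-equation cancellation, the momentum-multiplier choice, the elliptic tie $-\Delta\phi=\sigma$, and Sobolev embedding for the nonlinear terms — all coincide with the paper's proof via Propositions \ref{prop1} and \ref{prop2}.
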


The next theorem is about the exponential convergence to the equilibrium of 
the VPFP system. 

\begin{theorem}
\label{thm2} There exist $\epsilon _{0}>0$ and $\eta >0$ such that for small
initial data $\mathcal{E}_{N}(0)\leq \epsilon _{0}$, the solutions decay
exponentially
\begin{equation}\label{decay}
\mathcal{E}_{N}(t)\leq {\mathcal{E}_N}(0)e^{-\eta t} ,
\end{equation}
where $\eta$ can be chosen as $ \frac{2\kappa}{5}$.
\end{theorem}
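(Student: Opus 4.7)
The strategy is to convert the energy inequality \eqref{eeee} of Theorem \ref{thm1} into a closed differential inequality for $\mathcal{E}_N$ alone. Specifically, I would prove the coercivity estimate
$$\mathcal{D}_N(t)\ \geq\ \frac{2\kappa}{5}\,\mathcal{E}_N(t),$$
so that \eqref{eeee} becomes $\frac{d}{dt}\mathcal{E}_N(t)+\frac{2\kappa}{5}\mathcal{E}_N(t)\leq 0$, and Gronwall's lemma gives \eqref{decay}.

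To establish the coercivity, I would control the equivalent expression $\sum_{|\alpha|\leq N}(\|\partial_x^\alpha g\|^2+\|\partial_x^\alpha\nabla\phi\|^2)$ by a multiple of $\mathcal{D}_N$. Decomposing $\partial_x^\alpha g$ via the projections $\mathbf{P}_0,\mathbf{P}_1,\{\mathbf{I-P}\}$, the first line of $\mathcal{D}_N$ immediately controls the $\mathbf{P}_1$ and $\{\mathbf{I-P}\}$ components at every order $|\alpha|\leq N$ (for the latter, use $\|\cdot\|_{L^2}\leq\|\cdot\|_\nu$). The $\mathbf{P}_0$ contributions are absorbed into the second line of $\mathcal{D}_N$: the term $4\|\mathbf{P}_0\partial_x^\alpha g\|^2$ with $|\alpha|\leq N-1$ handles orders up to $N-1$, and since $\mathbf{P}_0$ commutes with $\partial_x$, the top-order piece $\sum_{|\alpha|=N}\|\mathbf{P}_0\partial_x^\alpha g\|^2$ is precisely captured by $\sum_{|\alpha|\leq N-1}\|\mathbf{P}_0\partial_x^\alpha\nabla g\|^2$.

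The most delicate input is the top-order potential estimate $\sum_{|\alpha|=N}\|\partial_x^\alpha\nabla\phi\|^2$, which is not directly present in $\mathcal{D}_N$. Here I would invoke the Poisson equation $-\Delta\phi=\sigma=\langle g,\sqrt{\mu}\rangle$: writing $\partial_x^\alpha\nabla\phi=\partial_x^\beta\nabla^2\phi$ with $|\beta|=N-1$ and using the $L^2$-boundedness of the Riesz transforms on $\mathbb{R}^3$ gives
$$\|\partial_x^\beta\nabla^2\phi\|_{L^2}^2\ =\ \|\partial_x^\beta\sigma\|_{L^2}^2\ \lesssim\ \|\mathbf{P}_0\partial_x^\beta g\|^2,$$
which is already in the second line of $\mathcal{D}_N$. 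For lower orders $|\alpha|\leq N-1$, the bound on $\|\partial_x^\alpha\nabla\phi\|^2$ is immediate from the $\frac{\kappa}{2}\|\partial_x^\alpha\nabla\phi\|^2$ term in $\mathcal{D}_N$.

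The main hurdle is this top-order Poisson estimate; all other pieces reduce to bookkeeping among the projection decompositions and careful accounting of the prefactors $\tfrac{1}{2},\,\tfrac{\lambda_0}{2},\,\tfrac{\kappa}{2},\,2\kappa$ appearing in $\mathcal{D}_N$. Using $\kappa=\min\{\lambda_0/2,1/8\}$ so that all coefficients can be bounded below by a uniform multiple of $\kappa$, these estimates combine to yield $\mathcal{E}_N\leq\frac{5}{2\kappa}\mathcal{D}_N$, and hence the advertised decay rate $\eta=2\kappa/5$. The exponential decay \eqref{decay} then follows at once from Theorem \ref{thm1} and Gronwall's inequality applied to $\mathcal{E}_N(t)$.
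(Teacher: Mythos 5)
Your proposal is correct and follows essentially the same route as the paper's own proof: compare $\mathcal{E}_N$ with $\widetilde{\mathcal{E}}_N$ (using $|G|\leq\widetilde{\mathcal{E}}_N$ and $\kappa\leq 1/8$ to get the factor $5/4$), decompose $\partial_x^\alpha g$ into $\mathbf{P}_0$, $\mathbf{P}_1$, $\{\mathbf{I-P}\}$ components and match them against the terms of $\mathcal{D}_N$, handle the top-order $\mathbf{P}_0$ and $\nabla\phi$ contributions via commutation of $\mathbf{P}_0$ with $\partial_x$ and the Poisson equation respectively, and conclude with the resulting linear differential inequality. The accounting of constants, including the identification $\|\mathbf{P}_0\partial_x^\beta g\|=\|\partial_x^\beta\sigma\|$ and the $L^2$-boundedness of $\nabla^2\Delta^{-1}$, matches the paper and recovers $\eta=2\kappa/5$.
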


Theorem \ref{thm2}, where an explicit convergence rate is given, is a direct
consequence of Theorem \ref{thm1} and here we provide the proof of it.  From the definition of $\mathcal{E}_N$ -- see (\ref{EN}), we first note that
 \[
 \mathcal{E}_N(t) \leq \frac{5}{4} \sum_{|\alpha |\leq N}\left( \Vert \del
_{x}^{\alpha }g\Vert ^{2}+\Vert \del_{x}^{\alpha }\nabla \phi \Vert
^{2}\right).
\]
Next,  from the Poisson equation in \eqref{pVPFP}, we deduce that when $|\alpha|=N$, $\|\del_x^\alpha \nabla\phi\|^2\leq \| \mathbf{P}_0\del_x^{\beta} g\|^2$ for $|\beta|=N-1$. Hence by the above definition of $\mathcal{D}_N$, we see that
\[
\frac{2\kappa}{5} \mathcal{E}_N(t) \leq \frac{\kappa}{2} \sum_{|\alpha |\leq N}\left( \Vert \del
_{x}^{\alpha }g\Vert ^{2}+\Vert \del_{x}^{\alpha }\nabla \phi \Vert
^{2}\right) \leq \mathcal{D}_N.
\]
 Therefore, the exponential decay \eqref{decay} follows from the energy inequality \eqref{eeee}  by letting
$\eta=\frac{2\kappa}{5}$.

\begin{remark}
The above theorems are also valid for the periodic domains without any
changes in the proof. Our results show that the convergence rate to the
equilibrium of VPFP system  is the same exponential for both the periodic box and
the whole space in the framework of smooth solutions. This is not the case
for other kinetic models such as Boltzmann equations \cite{DS} and for fluid-kinetic models \cite{CDM, GHMZ}.
\end{remark}

\begin{remark}
The global existence of solutions to \eqref{VPFP} follows from the a priori
global energy bound by rather standard method. In this paper, we focus on
proving the uniform estimates.
\end{remark}

The proof of Theorem \ref{thm1} is based on the novel
uniform-in-time energy methods developed for the study of
Boltzmann equations over the years; for its original idea of the proof relevant to our model, for instance see \cite{G1} and also see \cite{LY}. It consists
of the instant energy estimates for $\mathcal{E}_N$ and the
 macroscopic equations. It is interesting to see how this mechanism through the macroscopic equations takes full advantage of the
self-consistent Poisson interaction, which is a key of getting the
exponential convergence. We remark that the method will not give
the exponential convergence rate for the linear
Vlasov-Fokker-Planck equation in the whole space without the
Poisson interaction.

\section{Energy estimates}\label{3}

In this section, we will derive the energy estimates for the VPFP system. The first part is on the instant energy estimates and the second part is on recovering the full dissipation of the perturbation via the macroscopic equations. 

\subsection{Instant energy estimates}

The goal of this subsection is to prove the following instant energy inequality:

\begin{proposition}
\label{prop1} There exists a constant $C>0$ independent of $t$ such that the
smooth solutions $(g,\nabla \phi )$ to the Vlasov-Poisson-Fokker-Planck
equations \eqref{pVPFP} satisfy the following energy inequality
\begin{equation}
\frac{1}{2}\frac{d}{dt}\widetilde{\mathcal{E}}_{N}(t)+\widetilde{\mathcal{D}}%
_{N}(t)\leq C({\widetilde{\mathcal{E}}_{N}(t)})^{\frac{1}{2}}\mathcal{D}%
_{N}(t) , \label{ee}
\end{equation}%
where
\begin{equation}
\begin{split}
\widetilde{\mathcal{E}}_{N}(t)& :=\sum_{|\alpha |\leq N}\left( \Vert \del%
_{x}^{\alpha }g\Vert ^{2}+\Vert \del_{x}^{\alpha }\nabla \phi \Vert
^{2}\right) \text{ and} \\
\widetilde{\mathcal{D}}_{N}(t)& :=\sum_{|\alpha |\leq N}\left( \Vert \del%
_{x}^{\alpha }u\Vert ^{2}+\lambda _{0}\Vert \{\mathbf{I-P}\}\del_{x}^{\alpha
}g\Vert _{\nu }^{2}\right) .
\end{split}
\label{ENthilda}
\end{equation}
\end{proposition}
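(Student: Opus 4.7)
The plan is to apply $\partial_{x}^{\alpha}$ to the perturbed system \eqref{pVPFP} for each $|\alpha|\leq N$, pair the result with $\partial_{x}^{\alpha}g$ in $L^{2}_{x,v}$, and sum. The transport $v\!\cdot\!\nabla_{x}\partial_{x}^{\alpha}g$ drops by antisymmetry. The collision pairing $\langle\partial_{x}^{\alpha}g,-L\partial_{x}^{\alpha}g\rangle$ gives, by the second form of the coercivity \eqref{coerL}, exactly the $\lambda_{0}\|\{\mathbf I-\mathbf P\}\partial_{x}^{\alpha}g\|_{\nu}^{2}+\|\partial_{x}^{\alpha}u\|^{2}$ pieces of $\widetilde{\mathcal D}_{N}$, and the time derivative yields $\tfrac{1}{2}\tfrac{d}{dt}\|\partial_{x}^{\alpha}g\|^{2}$.

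The key structural step is to recognize that the self-consistent field term $v\sqrt{\mu}\!\cdot\!\nabla_{x}\phi$ contributes the missing $\tfrac{1}{2}\tfrac{d}{dt}\|\partial_{x}^{\alpha}\nabla_{x}\phi\|^{2}$. After $v$-integration its pairing with $\partial_{x}^{\alpha}g$ reduces to $\int\nabla_{x}\partial_{x}^{\alpha}\phi\cdot\partial_{x}^{\alpha}u\,dx$. Multiplying \eqref{pVPFP} by $\sqrt{\mu}$ and integrating in $v$ yields the continuity equation $\sigma_{t}+\nabla_{x}\!\cdot\! u=0$ (the field term vanishes because $\int v\mu\,dv=0$ and the nonlinear flux vanishes because $\nabla_{v}\sqrt{\mu}+\tfrac{v}{2}\sqrt{\mu}=0$). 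Combined with $-\Delta\partial_{x}^{\alpha}\phi=\partial_{x}^{\alpha}\sigma$ and one integration by parts in $x$, the pairing becomes exactly $\tfrac{1}{2}\tfrac{d}{dt}\|\nabla_{x}\partial_{x}^{\alpha}\phi\|^{2}$, closing the $\widetilde{\mathcal E}_{N}$ time derivative.

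What remains is the truly nonlinear term. A Leibniz expansion of $\partial_{x}^{\alpha}[(\tfrac{v}{2}g-\nabla_{v}g)\!\cdot\!\nabla_{x}\phi]$ produces trilinear integrals of the form
\begin{equation*}
\int\partial_{x}^{\alpha-\beta}\nabla_{x}\phi\cdot\bigl(\tfrac{v}{2}\partial_{x}^{\beta}g-\nabla_{v}\partial_{x}^{\beta}g\bigr)\,\partial_{x}^{\alpha}g\,dx\,dv.
\end{equation*}
Since $N\geq 3$ in dimension three, H\"older's inequality combined with the Sobolev embeddings $H^{2}_{x}\hookrightarrow L^{\infty}_{x}$ and $H^{1}_{x}\hookrightarrow L^{6}_{x}$ allows one to place the lowest-derivative factor in $L^{\infty}_{x}$ (or to split between $L^{6}_{x}$ and $L^{3}_{x}$ in borderline cases). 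The $\nabla_{v}\partial_{x}^{\beta}g$ and $\tfrac{v}{2}\partial_{x}^{\beta}g$ contributions are absorbed by the $\nu$-norm built into $\mathcal D_{N}$, leading to the desired bound $C\widetilde{\mathcal E}_{N}^{1/2}\mathcal D_{N}$.

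The main obstacle is the bookkeeping at top derivative order, because $\mathcal D_{N}$ controls $\|\partial_{x}^{\alpha}\nabla_{x}\phi\|^{2}$ and $\|\mathbf P_{0}\partial_{x}^{\alpha}g\|^{2}$ only for $|\alpha|\leq N-1$. At the endpoints (all derivatives on $\nabla_{x}\phi$ or all on $g$), the high-derivative factor of $\nabla_{x}\phi$ must be placed into $\widetilde{\mathcal E}_{N}^{1/2}$, while the top-order $\partial_{x}^{\alpha}g$ must be decomposed as $\mathbf P_{0}g+\mathbf P_{1}g+\{\mathbf I-\mathbf P\}g$, using the $\nu$-dissipation of the $\{\mathbf I-\mathbf P\}$ component, the $\|\partial_{x}^{\alpha}u\|^{2}$ term for the $\mathbf P_{1}$ component, and the Poisson-based inequality $\|\partial_{x}^{N}\nabla_{x}\phi\|\leq C\|\mathbf P_{0}\partial_{x}^{N-1}g\|$ together with the $\|\mathbf P_{0}\partial_{x}^{\alpha}\nabla g\|^{2}$ piece of $\mathcal D_{N}$ to recover control of the hydrodynamic part. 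Carrying out this case analysis for all multi-indices in the Leibniz sum is where the bulk of the technical work lies.
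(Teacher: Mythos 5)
Your proposal is correct and follows essentially the same route as the paper: apply $\del_x^\alpha$ to \eqref{pVPFP}, pair with $\del_x^\alpha g$, use the continuity equation $\sigma_t+\nabla\cdot u=0$ together with $-\Delta\phi=\sigma$ to convert $\int\del_x^\alpha u\cdot\del_x^\alpha\nabla\phi\,dx$ into $\tfrac12\tfrac{d}{dt}\|\del_x^\alpha\nabla\phi\|^2$, invoke the coercivity \eqref{coerL}, and close the nonlinear commutator by a high--low Leibniz split with Sobolev embedding, decomposing $g$ into $\mathbf P_0 g+\mathbf P_1 g+\{\mathbf I-\mathbf P\}g$ so that each piece lands either in $(\widetilde{\mathcal E}_N)^{1/2}$ or in $\mathcal D_N$. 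Your observations about the top-order endpoints (controlling $\|\del_x^N\nabla\phi\|$ and $\|\mathbf P_0\del_x^N g\|$ through the Poisson relation and the $\|\mathbf P_0\del_x^\alpha\nabla g\|^2$ piece of $\mathcal D_N$) match what the paper uses implicitly, so no gap remains.
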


\begin{proof} 
First, we project \eqref{pVPFP} onto $\{\sqrt{\mu}\}$ to get the conservation of the density fluctuation 
\begin{equation}\label{a}
\sigma_t+\nabla\!\cdot\! {u}=0,\quad -\Delta\phi= \sigma 
\end{equation}
which will be frequently used throughout the argument. We will start with the case of $\alpha=0$ in \eqref{ee}: multiply \eqref{pVPFP} by $g$ and integrate over
$\Omega\times\mathbb{R}^3$ to get
\[
\frac12\frac{d}{dt}\int g^2 dxdv+\underbrace{\int
gv\sqrt{\mu}\!\cdot\!\nabla_x\phi dxdv}_{(i)} +\underbrace{\int
g(\frac{v}{2}g-\nabla_v g)\!\cdot\!\nabla_x\phi dxdv}_{(ii)}{-\int
gLgdxdv}=0 .
\]
For $(i)=\int {u}\cdot \nabla\phi dx $, we use the integration by parts and  \eqref{a} to get
\[
(i)=-\int (\nabla\cdot u)\phi dx = \int \sigma_t\phi dx =-\int \Delta\phi_t \phi dx = \int \nabla\phi_t\cdot\nabla\phi dx
\]
and thus we derive that 
\[
(i)=\frac12\frac{d}{dt}\int  |\nabla\phi|^2dx .
\]
For $(ii)$, we use the macroscopic variables and the microscopic part to rewrite $g$: 
\[
\begin{split}
(ii)=\int \frac{g^2}{2} v\cdot\nabla\phi dxdv= \int \frac{1}{2}
(\sigma\smu+u\cdot v\smu+\{\mathbf{I-P}\}g)^2 v\cdot\nabla\phi dx
dv ,
\end{split}
\]
then by taking the $L^\infty$ of the potential, 
we get
\[
|(ii)|\precsim \|\nabla\phi\|_\infty(\|\sigma\|^2_{L^2}+\|u\|^2_{L^2}+\|\{\mathbf{I-P}\}g\|^2_\nu)\,.
\]
With \eqref{coerL}, we obtain the following: 
\begin{equation}
\begin{split}
\frac12\frac{d}{dt}\left(\|g\|^2+\|\nabla\phi\|^2\right) +&\lambda_0\|\{\mathbf{I-P}\}g\|^2_\nu +\|u\|^2\\
\precsim  \|\nabla\phi\|_\infty&(\|\sigma\|^2+\|u\|^2+\|\{\mathbf{I-P}\}g\|^2_\nu)\,.
\end{split}
\end{equation}

We next derive  the higher-order estimates. Let  $|\alpha|\leq N$.
Take $\del^\alpha$ of \eqref{pVPFP}
\[
\del_x^\alpha g_{t}+v\cdot\nabla_x\del_x^\alpha g+v\sqrt{\mu}\!\cdot\!\del_x^\alpha\nabla_x\phi+\del^\alpha_x\left[(\frac{v}{2}g-\nabla_v g)\!\cdot\!\nabla_x\phi\right]-L\del_x^\alpha g=0. 
\]
The linear terms can be treated in the same way as in the case of $\alpha=0$. Hence, by multiplying by $\del_x^\alpha g$ and integrating we get
\[
\begin{split}
&\frac12\frac{d}{dt}\left(\|\del_x^\alpha g\|^2+\|\del_x^\alpha\nabla\phi\|^2\right) +\lambda_0\|\{\mathbf{I-P}\}\del_x^\alpha g\|^2_\nu +\|\del_x^\alpha u\|^2\\
&\leq -\sum_{|\beta|\leq |\alpha|}C_\beta\int \del^\alpha_xg \left[(\frac{v}{2}\del_x^{\alpha-\beta}g-\nabla_v \del_x^{\alpha-\beta}g)\!\cdot\!\del^\beta_x\nabla_x\phi\right] dxdv\\
&= \sum_{|\beta|\leq [\frac{|\alpha|}{2}]} +
\sum_{|\beta|>[\frac{|\alpha|}{2}]} =: (I)+(II) ,
\end{split}
\]
where $[\gamma]$ denotes the Gauss number i.e. the greatest integer less than or equal to $\gamma$.

Now by using the identity $\del_x^{\alpha-\beta}g=\del_x^{\alpha-\beta} \sigma\smu+\del_x^{\alpha-\beta}u\cdot v\smu+\{\mathbf{I-P}\}\del_x^{\alpha-\beta}g$ and by Cauchy-Schwartz inequality, we see that
\[
\begin{split}
|(I)|\precsim  \sum_{|\beta|\leq [\frac{|\alpha|}{2}]}\| \del_x^\beta\nabla_x\phi \|_\infty \big(&\| \del^\alpha_x\sigma\|^2
+\|\del_x^\alpha u\|^2+\|\{\mathbf{I-P}\}\del^\alpha_x g\|^2_\nu \\
&+\| \del^{\alpha-\beta}_x\sigma\|^2
+\|\del_x^{\alpha-\beta} u\|^2+\|\{\mathbf{I-P}\}\del^{\alpha-\beta}_x g\|^2_\nu \big)
\end{split}
\]
and
\[
\begin{split}
|(II)|\precsim  \sum_{|\beta|>[\frac{|\alpha|}{2}]} \big(   \| \del^{\alpha-\beta}_x\sigma\|_\infty
&+\|\del_x^{\alpha-\beta} u\|_\infty+\sup_{x} \big|\{\mathbf{I-P}\}\del^{\alpha-\beta}_x g \big|_{L^2_v} \big)  \\
\cdot\big( \| \del^\alpha_x\sigma&\|^2
+\|\del_x^\alpha u\|^2+\|\{\mathbf{I-P}\}\del^\alpha_x g\|^2_\nu +\|\del_x^\beta \nabla_x\phi  \|^2 \big)\,.
\end{split}
\]
Since $N\geq 3$, Sobolev embedding yields 
\[
 \sum_{|\beta|\leq [\frac{|\alpha|}{2}]} \| \del_x^\beta\nabla_x\phi \|_\infty +   \sum_{|\beta|>[\frac{|\alpha|}{2}]}  \big(\| \del^{\alpha-\beta}_x\sigma\|_\infty
 +\|\del_x^{\alpha-\beta} u\|_\infty+\sup_{x} \big|\{\mathbf{I-P}\}\del^{\alpha-\beta}_x g \big|_{L^2_v} \big)\precsim ({\widetilde{\mathcal{E}}_N})^{\frac12}.
\]
Therefore, we obtain
\[
\frac12\frac{d}{dt}{\widetilde{\mathcal{E}}_N}
+\widetilde{\mathcal{D}}_N\precsim({\widetilde{\mathcal{E}}_N})^{\frac12}
\mathcal{D}_N ,
\]
where we have used the elliptic regularity of the Poisson equation in \eqref{pVPFP}
\begin{equation}\label{er}
\|\phi\|_{H^{s+2}} \leq C \| \sigma \|_{H^s} \text{ for }s\geq 0\,.
\end{equation}
This completes the proof of Proposition \ref{prop1}.
\end{proof}

\subsection{Macroscopic equations}

In this subsection, we will prove that the macroscopic part $\mathbf{P}_0g$ as well as the potential part $\nabla\phi$ of the solution to the VPFP system 
also dissipates due to the extra damping through the Coulomb interaction.
This will be established via so-called the method of macroscopic equations. 

\begin{proposition}
\label{prop2} There exists a constant $C>0$ independent of $t$ such that the
smooth solutions $(g,\nabla \phi )$ to the Vlasov-Poisson-Fokker-Planck
equations \eqref{pVPFP} satisfy the following energy inequality
\begin{equation}
\begin{split}
 \frac{d}{dt}G(t)&+\sum_{|\beta |\leq N-1}\left( \frac{1}{2}\Vert  \del_{x}^{\beta }\nabla\phi \Vert
^{2}+2\Vert \del_{x}^{\beta
}\sigma \Vert ^{2}+\frac{1}{2}\Vert \del_{x}^{\beta } \nabla\sigma \Vert
^{2}\right)  \\
& \leq \sum_{|\beta |\leq N-1}\left(  \Vert \del_{x}^{\beta
}u\Vert ^{2}+\Vert \nabla \cdot \del_{x}^{\beta
}u\Vert ^{2}+\Vert \{\mathbf{I-P}\}\del_{x}\del_{x}^{\beta
}g\Vert _{\nu }^{2}\right) \\
&\quad+C(\widetilde{\mathcal{E}}_{N}(t))^{\frac{1}{2}%
}\sum_{|\beta |\leq N-1}\big(\Vert \del_{x}^{\beta }\nabla\phi
\Vert ^{2}+\Vert \del_{x}^{\beta }\sigma \Vert ^{2} \big) ,
\end{split}%
\label{eee}
\end{equation}%
where
\begin{equation}
G(t)=\sum_{|\beta |\leq N-1}\int \big[\del_{x}^{\beta }u^{i}(\del_{x_{i}}\del%
_{x}^{\beta }\sigma +\del_x^\beta\del_{x_i}\phi)+\frac{1}{2}(|\del_{x}^{\beta }\nabla\phi |^{2}
+|\del_{x}^{\beta }\sigma |^{2} )\big]dx\,.
\label{G}
\end{equation}
\end{proposition}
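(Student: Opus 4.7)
The plan is to derive macroscopic (fluid-type) equations for $(\sigma,u,\phi)$ by taking moments of \eqref{pVPFP} against $\smu$ and $v\smu$, and then to show that the cross functional $G(t)$ in \eqref{G} produces the desired dissipation of $\nabla\phi$, $\sigma$, and $\nabla\sigma$ through a Kawashima-type compensation made possible by the self-consistent Coulomb coupling. Throughout I abbreviate $\tilde\phi:=\del_x^\beta\phi$, $\tilde\sigma:=\del_x^\beta\sigma$, $\tilde u:=\del_x^\beta u$ for $|\beta|\leq N-1$.

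First I would record the macroscopic system. The $\smu$-moment yields the continuity-Poisson pair \eqref{a}. Multiplying \eqref{pVPFP} by $v_i\smu$ and integrating in $v$, the transport moment produces $\del_{x_i}\sigma+\sum_j\del_{x_j}\Theta_{ij}$ with $\Theta_{ij}:=\int v_iv_j\smu\,\{\mathbf{I-P}\}g\,dv$, the field moment produces $\del_{x_i}\phi$, and $-Lg$ contributes $+u_i$ via $L(v\smu)=-v\smu$. For the Coulomb nonlinearity, integrating the $\nabla_v g$-piece by parts in $v$ using $\del_{v_j}(v_i\smu)=\delta_{ij}\smu-\tfrac{v_iv_j}{2}\smu$, the cubic $\int v_iv_j\smu g\,dv$ pieces from $\tfrac{v}{2}g\cdot\nabla_x\phi$ and $\nabla_v g\cdot\nabla_x\phi$ cancel exactly, leaving only the bilinear remainder $\sigma\,\del_{x_i}\phi$. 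Collecting,
\begin{equation*}
u_t+\nabla(\sigma+\phi)+u=-\nabla\!\cdot\!\Theta-\sigma\nabla\phi,
\end{equation*}
and applying $\del_x^\beta$ preserves this linear part, with the nonlinearity becoming $\del_x^\beta(\sigma\nabla\phi)$.

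Next I would differentiate $G(t)$ in time. In the cross term $\int\tilde u\cdot\nabla(\tilde\sigma+\tilde\phi)\,dx$, inserting the $\del_x^\beta$-momentum equation for $\tilde u_t$ yields $-\|\nabla(\tilde\sigma+\tilde\phi)\|^2$, a damping piece $-\int\tilde u\cdot\nabla(\tilde\sigma+\tilde\phi)\,dx$, and the microscopic and nonlinear remainders. The contribution from $\nabla(\tilde\sigma+\tilde\phi)_t$, using $\tilde\sigma_t=-\nabla\!\cdot\!\tilde u$ from continuity and $-\Delta\tilde\phi_t=\tilde\sigma_t$ from Poisson, gives $\|\nabla\!\cdot\!\tilde u\|^2+\|\nabla\tilde\phi_t\|^2$ after integration by parts. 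Simultaneously, differentiating $\tfrac12(\|\nabla\tilde\phi\|^2+\|\tilde\sigma\|^2)$ while exploiting $\tilde\phi=(-\Delta)^{-1}\tilde\sigma$ produces exactly $+\int\tilde u\cdot\nabla(\tilde\sigma+\tilde\phi)\,dx$, which cancels the damping piece. The Poisson identity
\begin{equation*}
\|\nabla(\tilde\sigma+\tilde\phi)\|^2=\|\nabla\tilde\sigma\|^2+2\|\tilde\sigma\|^2+\|\nabla\tilde\phi\|^2
\end{equation*}
(since $\int\nabla\tilde\sigma\cdot\nabla\tilde\phi\,dx=-\int\tilde\sigma\Delta\tilde\phi\,dx=\|\tilde\sigma\|^2$) is what generates the $2\|\tilde\sigma\|^2$-dissipation characteristic of the Coulomb case.

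Finally I would absorb the remainders. The bound $\|\nabla\tilde\phi_t\|^2\leq\|\tilde u\|^2$ follows from $\|\nabla\tilde\phi_t\|^2=\int\tilde u\cdot\nabla\tilde\phi_t\,dx$ by Cauchy-Schwarz. Splitting $-\int\nabla\!\cdot\!\tilde\Theta\cdot\nabla\tilde\sigma\,dx$ and $-\int\nabla\!\cdot\!\tilde\Theta\cdot\nabla\tilde\phi\,dx$ separately and applying Young's inequality absorbs $\tfrac12\|\nabla\tilde\sigma\|^2$ and $\tfrac12\|\nabla\tilde\phi\|^2$ into the LHS while leaving the full $2\|\tilde\sigma\|^2$ intact and depositing $\|\nabla\!\cdot\!\tilde\Theta\|^2\precsim\|\{\mathbf{I-P}\}\del_x\del_x^\beta g\|_\nu^2$ on the right (since $v_iv_j\smu$ is Schwartz in $v$). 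The nonlinear contribution $-\int\del_x^\beta(\sigma\nabla\phi)\cdot\nabla(\tilde\sigma+\tilde\phi)\,dx$ is controlled by $C(\widetilde{\mathcal{E}}_N)^{1/2}(\|\nabla\tilde\phi\|^2+\|\tilde\sigma\|^2)$ using Leibniz, Sobolev embedding in $\mathbb{R}^3$ (valid since $N\geq 3$), and the elliptic regularity \eqref{er} to trade derivatives on $\phi$ for derivatives on $\sigma$, in the same spirit as in Proposition \ref{prop1}. I expect the main obstacle to be this last nonlinear estimate: because $\sigma\nabla\phi$ consists of two factors at the same regularity, Leibniz together with elliptic regularity must be applied with care so that the top-order output lands on $\|\nabla\tilde\phi\|$ or $\|\tilde\sigma\|$ (as in the RHS of \eqref{eee}) and never on $\|\nabla\tilde\sigma\|$, which is already spoken for on the LHS and cannot absorb a top-order perturbation.
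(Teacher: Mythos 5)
Your proposal is correct and follows essentially the same route as the paper: derive the momentum moment equation \eqref{b}, test the $\del_x^\beta$-differentiated version against $\nabla\del_x^\beta(\sigma+\phi)$, and exploit the Poisson relation $-\Delta\phi=\sigma$ to extract macroscopic damping, with the nonlinear and microscopic remainders absorbed by Young's inequality and Sobolev embedding. The differences are only cosmetic---the paper tests against $\nabla\del_x^\beta\sigma$ and $\nabla\del_x^\beta\phi$ separately and then adds the two resulting inequalities, whereas you test against the sum at once, identifying the $2\|\del_x^\beta\sigma\|^2$ damping directly from the cross term $\int\nabla\del_x^\beta\sigma\cdot\nabla\del_x^\beta\phi\,dx=\|\del_x^\beta\sigma\|^2$; and your derivation of $\|\nabla\del_x^\beta\phi_t\|\leq\|\del_x^\beta u\|$ via a single integration by parts is a bit more elementary than the paper's appeal to the $L^2$-boundedness of the Riesz-type operator $\nabla\Delta^{-1}\nabla\cdot\,$.
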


\begin{proof} Our starting point of the proof is the macroscopic equation for $u=\langle g,v\sqrt{\mu}\rangle$, which will be derived shortly,  in addition to the macroscopic equation for $\sigma=\langle g,\sqrt{\mu}\rangle$:  \eqref{a}. 
First we note that since $\langle v^i\smu,v^j\smu\rangle=\delta^{ij}$ and $\langle v^iv^j\smu,v^k\smu\rangle=0$, 
\[
\begin{split}
\langle v^i\smu, v^j\del_{x_j}g \rangle&= \langle v^i\smu, v^j \big[\del_{x_j} \sigma\smu+ \del_{x_j}u^k\cdot v^k\smu+
\del_{x_j}\{\mathbf{I-P}\} g \big] \rangle \\
&=\del_{x_i}\sigma+ \del_{x_j} \langle v^iv^j \smu, \{\mathbf{I-P}\} g \rangle\\
\end{split}
\]
and 
\[
\begin{split}
\langle v^i\smu, (\frac{v^j}{2}g-\del_{v^j} g)\del_{x_j}\phi\rangle& =\frac12\langle v^iv^j\smu, \big[ \sigma\smu+u^k\cdot v^k\smu+\{\mathbf{I-P}\}g\big]\rangle \del_{x_j}\phi \\
&\quad+ \langle \delta^{ij}\smu-\frac12 v^iv^j\smu , \big[ \sigma\smu+u^k\cdot v^k\smu+\{\mathbf{I-P}\}g\big]\rangle \del_{x_j}\phi  \\
&=  \langle \delta^{ij}\smu, \big[ \sigma\smu+u^k\cdot
v^k\smu+\{\mathbf{I-P}\}g\big]\rangle \del_{x_j}\phi \\
&= \sigma
\del_{x_i}\phi .
\end{split}
\]
Hence by further using \eqref{propL}, the projection of the VPFP equation \eqref{pVPFP} onto $\{v^i\sqrt{\mu}\}$ can be recorded as follows:
\begin{equation}\label{b}
u^i_t+\del_{x_i}\sigma +\del_{x_i}\phi +\sigma\del_{x_i}\phi +u^i+
\del_{x_j}\langle v^iv^j\smu,\{\mathbf{I-P}\}g\rangle=0
\end{equation}
which is the macroscopic equation for $u$ which is not decoupled from the microscopic part. The idea is to estimate $\sigma$ and $\nabla\phi$ via their elliptic coupling and the dissipation of the microscopic part $\{\mathbf{I-P}_0\}g$. We will prove \eqref{eee} first when $\beta=0$. To obtain the estimate of $\sigma=\mathbf{P}_0g$ part, we multiply
\eqref{b} by $\del_{x_i}\sigma$ and integrate: 
\begin{equation}
\begin{split}
\int u_t^i \del_{x_i}\sigma dx +\int |\del_{x_i}\sigma|^2dx +\int \del_{x_i}\phi \del_{x_i}\sigma dx +\int \sigma\del_{x_i}\phi \del_{x_i}\sigma dx +\int u^i\del_{x_i}\sigma dx\\
+\int  \del_{x_j}\langle v^iv^j\smu,\{\mathbf{I-P}\}g\rangle
\del_{x_i}\sigma dx=0. 
\end{split}\label{3.9}
\end{equation}
We denote it by $(I)+(II)+(III)+(IV)+(V)+(VI)=0$. We do the integration by parts and use \eqref{a} to estimate $(I)$, $(III)$, $(IV)$, $(V)$ as follows: 
\[
\begin{split}
(I)&=\frac{d}{dt}\int u^i\del_{x_i}\sigma dx -\int u^i \del_{x_i}\sigma_t dx=\frac{d}{dt}\int u^i\del_{x_i}\sigma dx +\int \del_{x_i} u^i \sigma_t dx\\
\
&=\frac{d}{dt}\int u^i\del_{x_i}\sigma dx -\int |\nabla\cdot u|^2dx, \\
\
%\end{split}
%\]
%\[
%\begin{split}
(III)&=-\int \Delta \phi \,\sigma \,dx =\int \sigma^2\,dx,\quad \\
\
(IV)&=-\frac12\int \Delta \phi \,\sigma^2 \,dx=\frac12\int \sigma^3\,dx, \\
\
%\end{split}
%\]
%\[
%\begin{split}
(V)&=-\int \del_{x_i}u^i \sigma dx =\int \sigma_t \sigma dx=\frac12\frac{d}{dt}\int \sigma^2 dx.
\end{split}
\]
We also see that by Cauchy-Swartz inequality 
\[|(VI)|\leq \frac12\| \nabla \sigma \|^2 +\frac12 \| \{\mathbf{I-P}\} \del_x g\|^2_\nu\,.
\]
Hence we obtain the following: 
\begin{equation}
\begin{split}
\frac{d}{dt}\int \big( u^i\del_{x_i}\sigma &+\frac12 \sigma^2\big)dx  +\int \sigma^2dx+\frac12\int |\nabla \sigma|^2dx\\
&  \leq \frac12\int |\sigma|^3\,dx+\int |\nabla\cdot u|^2dx+ \frac12 \| \{\mathbf{I-P}\} \del_x g\|^2_\nu.
\end{split}\label{3.10}
\end{equation}

In order to get the estimate of $\nabla\phi$ part, we now multiply  \eqref{b} by $\del_{x_i}\phi$ and integrate.  Then we get the similar expression as \eqref{3.9}
\begin{equation*}
\begin{split}
\int u_t^i \del_{x_i}\phi dx +\int \del_{x_i}\sigma \del_{x_i}\phi dx +\int |\del_{x_i}\phi |^2 dx +\int \sigma|\del_{x_i}\phi |^2 dx +\int u^i\del_{x_i}\phi dx\\
+\int  \del_{x_j}\langle v^iv^j\smu,\{\mathbf{I-P}\}g\rangle
\del_{x_i}\phi dx=0 ,
\end{split}
\end{equation*}
denoted by $(i)+(ii)+(iii)+(iv)+(v)+(vi)=0$. The second term
$(ii)$ is the same as $(III)$ which gives rise to $\int
\sigma^2dx$. The third term $(iii)$ is a good damping term, the
fourth term $(iv)$ is a nonlinear term, and we will use
Cauchy-Schwartz inequality for $(vi)$ as before. The fifth term
$(v)$ forms the time integral as before:
\[
(v)=-\int \del_{x_i}u^i \phi dx =\int \sigma_t \phi
dx=\frac12\frac{d}{dt}\int |\nabla\phi|^2 dx .
\]
For the first term $(i)$, we do the integration by parts and use \eqref{a} to derive
\[
\begin{split}
(i)&=\frac{d}{dt}\int u^i\del_{x_i}\phi dx -\int u^i \del_{x_i}\phi_t dx=\frac{d}{dt}\int u^i\del_{x_i}\phi dx
+\int \del_{x_i} u^i \phi_t dx\\
\
&=\frac{d}{dt}\int u^i\del_{x_i}\phi dx -\int |\nabla\phi_t|^2dx.
\end{split}
\]
Instead of estimating the time derivatives directly, we rewrite
the second term by using the equation again: since
$\phi_t=-\Delta^{-1}\sigma_t=\Delta^{-1}(\nabla\cdot u)$,
\[
\int |\nabla\phi_t|^2dx = \int |\nabla \Delta^{-1}(\nabla\cdot
u)|^2dx \leq \int  |u|^2 dx ,
\]
where we have used the $L^2$ boundedness of $\nabla\Delta^{-1}\nabla\cdot\,$ which is a well-known singular integral operator:
Riesz potential \cite{St}.
Thus we get
\[
\begin{split}
\frac{d}{dt}&\int  \big(u^i\del_{x_i}\phi +\frac12|\nabla\phi|^2 \big)dx  +\int \sigma^2dx+\frac12\int |\nabla \phi|^2dx\\
&  \leq \int |\sigma| |\nabla\phi|^2\,dx+\int |u|^2dx+ \frac12 \| \{\mathbf{I-P}\} \del_x g\|^2_\nu.
\end{split}
\]
By combining it with \eqref{3.10}, we obtain
\[
\begin{split}
\frac{d}{dt}&\int  \big[u^i(\del_{x_i}\phi+\del_{x_i}\sigma) +\frac12(|\nabla\phi|^2+\sigma^2) \big]dx  +2\int \sigma^2dx+\frac12\int (|\nabla \phi|^2+|\nabla\sigma|^2)dx\\
&  \leq \int |\sigma| ( |\nabla\phi|^2+\sigma^2)dx+\int |u|^2dx+\int |\nabla\cdot u|^2dx+ \| \{\mathbf{I-P}\} \del_x g\|^2_\nu.
\end{split}
\]

We next handle the higher order estimates which are needed to
control nonlinear terms within our energy functionals. We will first  derive the estimates for
$\del_{x_i}\del_x^\alpha \sigma$  for  $|\alpha|\leq N-1$. Take
$\del_x^\alpha$ of \eqref{b}, multiply by $\del_{x_i}\del_x^\alpha
\sigma$  and integrate to get
\[
\begin{split}
\int  \del_x^\alpha u_t^i \del_{x_i}\del_{x}^\alpha \sigma dx +\int |\del_{x_i}\del_x^\alpha \sigma|^2dx +\int \del_{x_i}\del_x^\alpha\phi \del_{x_i}\del_x^\alpha \sigma dx +\underline{\int \del_x^\alpha[\sigma\del_{x_i}\phi] \del_{x_i}\del_x^\alpha \sigma dx}_{(\ast)} \\
+\int \del_x^\alpha u^i\del_{x_i}\del_x^\alpha \sigma dx
+\int  \del_{x_j}\del_x^\alpha \langle v^iv^j\smu,\{\mathbf{I-P}\}g\rangle \del_{x_i}\del_x^\alpha \sigma dx=0.
\end{split}
\]
We follow the same procedure as in $\alpha=0$ case. The linear terms can be treated in the same way. For the nonlinear terms $(\ast)$, note that
\[
\begin{split}
(\ast)&= \int \del_x^{\alpha} \sigma\del_{x_i}\phi \del_{x_i}\del_x^\alpha \sigma dx +\int \sigma\del_{x_i}\del_x^{\alpha}\phi \del_{x_i}\del_x^\alpha \sigma dx+\sum_{0<|\beta|<|\alpha|} C_\beta\int  \del_x^{\alpha-\beta} \sigma\del_{x_i}\del_x^{\beta}\phi \del_{x_i}\del_x^\alpha \sigma dx\\
&=\frac32\int \sigma|\del_x^\alpha \sigma|^2 dx-\int  \del_{x_i}\sigma\del_{x_i}\del_x^{\alpha}\phi\del_x^\alpha \sigma dx\\
&\quad-
\sum_{0<|\beta|<|\alpha|}
C_\beta\big[\int   \del_{x_i}\del_x^{\alpha-\beta} \sigma\del_{x_i}\del_x^{\beta}\phi\del_x^\alpha \sigma dx
- \int  \del_x^{\alpha-\beta} \sigma \del_x^{\beta}\sigma\del_x^\alpha \sigma dx\big]
\end{split}
\]
and thus by Sobolev embedding, we get
\[
|(\ast)|\precsim (\widetilde{\mathcal{E}}_N)^{\frac12} \sum_{|\beta|\leq |\alpha|} \|\del_x^\beta \sigma \|^2
\]
and in turn,
\[
\begin{split}
\frac{d}{dt}\int \big( \del_x^\alpha u^i\del_{x_i}\del_x^\alpha \sigma &+\frac12 |\del_x^\alpha \sigma|^2\big)dx  +\int |\del_x^\alpha \sigma|^2dx+\frac12\int |\nabla \del_x^\alpha \sigma|^2dx\\
&  \leq \int |\nabla\cdot \del_x^\alpha u|^2dx
+C(\widetilde{\mathcal{E}}_N)^{\frac12} \sum_{|\beta|\leq
|\alpha|} \|\del_x^\beta \sigma \|^2 +\frac12 \| \{\mathbf{I-P}\}
\del_x \del_x^\alpha g\|^2_\nu .
\end{split}
\]
 The estimates of $\del_{x_i}\del_x^\alpha \phi$ can be obtained in a similar way:
  \[
\begin{split}
\frac{d}{dt}\int \big( &\del_x^\alpha u^i\del_{x_i}\del_x^\alpha \phi +\frac12 |\del_x^\alpha \nabla\phi|^2\big)dx  +\int |\del_x^\alpha \sigma|^2dx+\frac12\int |\nabla \del_x^\alpha \phi|^2dx\\
&  \leq \int | \del_x^\alpha u|^2dx
+C(\widetilde{\mathcal{E}}_N)^{\frac12} \sum_{|\beta|\leq
|\alpha|} (\|\del_x^\beta \nabla\phi \|^2+ \|\del_x^\beta \sigma
\|^2) +\frac12 \| \{\mathbf{I-P}\} \del_x \del_x^\alpha g\|^2_\nu
.
\end{split}
\]
By adding the above two inequalities, we finish the proof of Proposition \ref{prop2}.
\end{proof}

\section{Proof of Theorem \protect\ref{thm1}}\label{4}

We are now ready to prove Theorem \ref{thm1}, based on Proposition \ref{prop1}
and \ref{prop2}.

\begin{proof}[Proof of Theorem \ref{thm1}] Let $\kappa=\min\{\lambda_0/2,1/8\}$. By combining \eqref{ee} and \eqref{eee}, we first deduce that there exists a constant $C>0$ so that
\[
\begin{split}
&\frac{d}{dt}\left(\widetilde{\mathcal{E}}_N(t)+ 2\kappa G(t)\right)  + \widetilde{\mathcal{D}}_N(t) +  2\kappa  \sum_{|\beta|\leq N-1}\left(2 \|\del_x^\beta \sigma\|^2+\frac12 \|\del_x^\beta\nabla \phi\|^2+\frac12 \|\nabla\del_x^\beta \sigma\|^2\right)
\\
&\quad \leq C ({\widetilde{\mathcal{E}}_N(t)})^{\frac12}
\mathcal{D}_N(t) ,
\end{split}
\]
where $\widetilde{\mathcal{E}}_N(t)$ is given in \eqref{ENthilda} and $G(t)$ in \eqref{G}.
Since $|G(t)|\leq  \widetilde{\mathcal{E}}_N(t)$, we see that $ \tfrac34  \widetilde{\mathcal{E}}_N(t)\leq \widetilde{\mathcal{E}}_N(t)+2 \kappa G(t) \leq \tfrac43 \widetilde{\mathcal{E}}_N(t) $.
Now we redefine an instant energy by
\begin{equation}
\mathcal{E}_N(t):=\widetilde{\mathcal{E}}_N(t)+ 2\kappa G(t).
\label{EN}
\end{equation}
Then by applying a standard continuity argument, we finally deduce  \eqref{eeee} by setting $\mathcal{E}_N$ sufficiently small initially.
\end{proof}\

%\textbf{Acknowledgements.} Hyung Ju Hwang acknowledges the support
%by Basic Science Research Program (2010-0008127) and Priority
%Research Centers Program (2010-0029638) through the National
%Research Foundation of Korea (NRF). Juhi Jang .....

%---------------------------------------------------------------------------------------------------------------------------------

\vskip 1cm
\noindent
{\sc Hyung Ju Hwang}\\
Department of Mathematics\\
Pohang University of Science and Technology \\
Pohang 790-784, Republic of Korea \\
{\tt hjhwang@postech.ac.kr}

\vskip 1cm
\noindent
{\sc Juhi Jang}\\
Department of Mathematics\\
University of California, Riverside\\
Riverside, CA 92521, USA \\
{\tt juhijang@math.ucr.edu}

\end{document}